\documentclass[11pt]{article}

\usepackage[margin=1.2in]{geometry}

\usepackage{amsmath}
\usepackage{amsthm}
\usepackage{graphicx}
\usepackage{color}
\usepackage{amsfonts}
\usepackage{amssymb}
\usepackage{mathrsfs}
\usepackage{amscd}
\usepackage{url}

\newcommand{\re}{\mathbb{R}}

\newcommand{\half}{\frac{1}{2}}
\newcommand{\lmd}{\lambda}

\newcommand{\eps}{\epsilon}

\def\af{\alpha}
\def\bt{\beta}

\newcommand{\reff}[1]{(\ref{#1})}

\newcommand{\mc}[1]{\mathcal{#1}}

\newcommand{\supp}[1]{\mbox{supp}(#1)}

\newcommand{\cv}[1]{\mbox{conv}(#1)}

\newcommand{\bdes}{\begin{description}}
\newcommand{\edes}{\end{description}}

\newcommand{\bal}{\begin{align}}
\newcommand{\eal}{\end{align}}

\newcommand{\bnum}{\begin{enumerate}}
\newcommand{\enum}{\end{enumerate}}

\newcommand{\bit}{\begin{itemize}}
\newcommand{\eit}{\end{itemize}}

\newcommand{\bea}{\begin{eqnarray}}
\newcommand{\eea}{\end{eqnarray}}
\newcommand{\be}{\begin{equation}}
\newcommand{\ee}{\end{equation}}

\newcommand{\baray}{\begin{array}}
\newcommand{\earay}{\end{array}}

\newcommand{\bsry}{\begin{subarray}}
\newcommand{\esry}{\end{subarray}}

\newcommand{\bca}{\begin{cases}}
\newcommand{\eca}{\end{cases}}

\newcommand{\bcen}{\begin{center}}
\newcommand{\ecen}{\end{center}}

\newcommand{\bbm}{\begin{bmatrix}}
\newcommand{\ebm}{\end{bmatrix}}

\newcommand{\bmx}{\begin{matrix}}
\newcommand{\emx}{\end{matrix}}

\newcommand{\bpm}{\begin{pmatrix}}
\newcommand{\epm}{\end{pmatrix}}

\newcommand{\btab}{\begin{tabular}}
\newcommand{\etab}{\end{tabular}}

\newtheorem{theorem}{Theorem}[section]
\newtheorem{pro}[theorem]{Proposition}

\theoremstyle{definition}

\newtheorem{exm}[theorem]{Example}

% set up the counters in sections

\setcounter{equation}{0}
\setcounter{subsection}{0}

\begin{document}

\title{ Convex Hulls of Quadratically Parameterized Sets
With Quadratic Constraints}

\author{Jiawang Nie\footnote{Department of Mathematics,
University of California, 9500 Gilman Drive, La Jolla, CA 92093.
Email: njw@math.ucsd.edu. The research was partially supported by NSF grants
DMS-0757212 and DMS-0844775.}
}

\date{October 10, 2011}

\maketitle

\centerline{\it \large Dedicated to Bill Helton on the occasion of his 65th birthday.}

\begin{abstract}
Let $V$ be a semialgebraic set parameterized as
\[
\{ (f_1(x), \ldots, f_m(x)): \, x \in T \}
\]
for quadratic polynomials $f_0, \ldots, f_m$ and a subset $T$ of $\re^n$.
This paper studies semidefinite representation of
the convex hull $\cv{V}$ or its closure, i.e.,
describing $\cv{V}$ by projections of spectrahedra
(defined by linear matrix inequalities).
When $T$ is defined by a single quadratic constraint,
we prove that $\cv{V}$ is equal to the first order moment type
semidefinite relaxation of $V$, up to taking closures.
Similar results hold when every $f_i$ is a quadratic form
and $T$ is defined by two homogeneous (modulo constants) quadratic constraints,
or when all $f_i$ are quadratic rational functions
with a common denominator and $T$ is defined by a single quadratic constraint,
under some general conditions.
\end{abstract}

\section{Introduction}

A basic question in convex algebraic geometry is to find
convex hulls of semialgebraic sets.
A typical class of semialgebraic sets is parameterized by multivariate polynomial
functions defined on some sets.
Let $V\subset \re^m$ be a set parameterized as
\be
V = \{ (f_1(x), \ldots, f_m(x)): \, x \in T \}
\ee
with every $f_i(x)$ being a polynomial and
$T$ a semialgebraic set in $\re^n$.
We are interested in finding a representation for
the convex hull $\cv{V}$ of $V$ or its closure,
based on $f_1,\ldots, f_m$ and $T$.
Since $V$ is semialgebraic, $\cv{V}$ is a convex semialgebraic set.
Thus, one wonders whether $\cv{V}$ is representable
by a spectrahedron or its projection, i.e.,
as a feasible set of {\it semidefinite programming (SDP)}.
A {\it spectrahedron} of $\re^k$ is a set defined
by a linear matrix inequality (LMI) like
\[
L_0 + w_1 L_1 + \cdots + w_k L_k \succeq 0
\]
for some constant symmetric matrices $L_0,\ldots,L_k$.
Here the notation $X \succeq 0$ (resp. $X \succ 0$)  means the matrix
$X$ is positive semidefinite (resp. definite).
Equivalently, a spectrahedron is the intersection
of a positive semidefinite cone and an affine linear subspace.
Not every convex semialgebraic set is a spectrahedron,
as found by Helton and Vinnikov \cite{HV}.
Actually, they \cite{HV} proved a necessary condition called {\it rigid convexity}
for a set to be a spectrahedron.
They also proved that rigid convexity is sufficient in the two dimensional case.
Typically, projections of spectrahedra are required in representing convex sets
(if so, they are also called {\it semidefinite representations}).
It has been found that a very general class of convex sets
are representable as projections of spectrahedra, as shown in \cite{HN1,HN2}.
The proofs used sum of squares (SOS) type representations of
polynomials that are positive on compact semialgebraic sets,
as given by Putinar \cite{Putinar} or Schm\"{u}dgen \cite{Smg}.
More recent work about semidefinite representations of convex semialgebraic sets
can be found in \cite{HN08,Las06,Las08,Nie08,Nie-MOR}.
%%
%%Properties of convex hulls of varieties are studied recently in
%%\cite{GPT08,RanStu10a,RanStu10b}.

A natural semidefinite relaxation for the convex hull $\cv{V}$
can be obtained by using the moment approach \cite{Las06,Par06}.
To describe it briefly, we consider the simple case that
$n=1$, $T=\re$ and $(f_1(x),f_2(x), f_3(x)) = (x^2,x^3,x^4)$ with $m=3$.
The most basic moment type semidefinite relaxation of $\cv{V}$ in this case is
\[
R = \left\{ (y_2, y_3, y_4):
\bbm  1 & y_1 & y_2 \\ y_2 & y_2 & y_3 \\ y_2 & y_3 & y_4 \ebm \succeq 0
\mbox{ for some } y_1 \in \re
\right\}.
\]
The underlying idea is to replace each monomial $x^i$ by
a lifting variable $y_i$ and to pose the LMI in the definition of $R$,
which is due to the fact that
\[
\bbm 1 \\ x \\ x^2 \ebm \bbm 1 \\ x \\ x^2 \ebm^T \quad = \quad
\bbm  1 & x & x^2 \\ x & x^2 & x^3 \\ x^2 & x^3 & x^4 \ebm
\quad \succeq \quad 0 \qquad \forall \,\, x\in \re.
\]
If $n=1$, the sets $R$ and $\cv{V}$ (or their closures) are equal
(cf. \cite{Par06}). When $T =\re^n$ with $n>1$,
we have similar results if every $f_i$ is quadratic
or every  $f_i$ is quartic but $n=2$ (cf. \cite{Hen09}). However,
in more general cases, similar results typically do not exist anymore.

In this paper, we consider the special case that every $f_i$ is quadratic
and $T$ is a quadratic set of $\re^n$.
When $T$ is defined by a single quadratic constraint,
we will show that the first order moment type semidefinite relaxation
represents $\cv{V}$ or its closure as the projection of a spectrahedron (Section~2).
This is also true when every $f_i$ is a quadratic form
and $T$ is defined by two
homogeneous (modulo constants) quadratic constraints (Section~3),
or when all $f_i$ are quadratic rational functions
with a common denominator and $T$ is defined by a single
quadratic constraint (Section~4), under some general conditions.

\bigskip
{\it Notations} \,
The symbol $\re$ (resp. $\re_+$) denotes the set of (resp. nonnegative) real numbers.
For a symmetric matrix, $X\prec 0$ means $X$ is negative definite ($-X\succ 0$);
$\bullet$ denotes the standard Frobenius inner product in matrix spaces;
$\|\cdot \|_2$ denotes the standard $2$-norm.
The superscript $^T$ denotes the transpose of a matrix;
$\overline{K}$ denotes the closure of a set $K$ in a Euclidean space,
and $\cv{K}$ denotes the convex hull of $K$.
Given a function $q(x)$ defined on $\re^n$, denote
\[
S(q)=\{x\in \re^n:\, q(x)\geq 0\}, \quad E(q)=\{x\in \re^n:\, q(x) = 0\}.
\]

\section{A single quadratic constraint}
\setcounter{equation}{0}

Suppose $V\subset \re^m$ is a semialgebraic set parameterized as
\be \label{q-var:one-q}
V = \{ (f_1(x), \ldots, f_m(x)): \, x \in T \}
\ee
where every $f_i(x) = a_i + b_i^Tx+ x^TF_ix$ is quadratic and
$T \subseteq \re^n$ is defined by a single
quadratic inequality $q(x)\geq0$ or equality $q(x)=0$.
The $a_i,b_i, F_i$ are vectors
or symmetric matrices of proper dimensions. Similarly, write
\[
q(x) = c + d^Tx + x^TQx.
\]
For every $x\in T$, it always holds that for $X=xx^T$
\[
f_i(x) = a_i + b_i^Tx+F_i\bullet X, \quad
q(x) = c + d^Tx +Q \bullet X \geq 0, \quad
\bbm 1 & x^T \\ x &  X \ebm \succeq 0.
\]
Clearly, when $T=S(q)$, the convex hull $\cv{V}$ of $V$ is contained in the convex set
\[
\mc{W}_{in} =  \left\{
(a_1 + b_1^Tx+F_1\bullet X, \ldots, a_m + b_m^Tx+F_m\bullet X )
\left| \baray{c} \bbm 1 & x^T \\ x &  X \ebm \succeq 0, \\
c + d^Tx +Q \bullet X \geq 0 \earay \right.
\right\}.
\]
When $T=E(q)$, the convex hull $\cv{V}$ is then contained in the convex set
\[
\mc{W}_{eq}  =  \left\{
(a_1 + b_1^Tx+F_1\bullet X, \ldots, a_m + b_m^Tx+F_m\bullet X )
\left| \baray{c} \bbm 1 & x^T \\ x &  X \ebm \succeq 0, \\
 c + d^Tx +Q \bullet X  = 0 \earay \right.
\right\}.
\]
Both $\mc{W}_{in}$ and $\mc{W}_{eq}$ are projections of spectrahedra.
One wonders whether $\mc{W}_{in}$ or $\mc{W}_{eq}$ is equal to $\cv{V}$.
Interestingly, this is almost always true, as given below.

\begin{theorem} \label{thm:S>=0}
Let $V,T,W_{in},W_{eq},q$ be defined as above, and $T \ne \emptyset $.
\bit
\item [(i)] Let $T=S(q)$. If $T$ is compact, then $\cv{V} = \mc{W}_{in}$;
otherwise,  $\overline{\cv{V}} =  \overline{\mc{W}_{in}} $.

\item [(ii)] Let $T=E(q)$. If $T$ is compact, then $\cv{V} = \mc{W}_{eq}$;
otherwise,  $\overline{\cv{V}} =  \overline{\mc{W}_{eq}} $.

\eit
\end{theorem}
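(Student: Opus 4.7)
The inclusion $\cv{V}\subseteq \mc{W}_{in}$ is immediate: for each $x\in T$, the pair $(x,xx^T)$ certifies $(f_1(x),\ldots,f_m(x))\in \mc{W}_{in}$, and $\mc{W}_{in}$ is itself convex as the affine image of a convex set. The analogous statement gives $\cv{V}\subseteq \mc{W}_{eq}$ in part~(ii). The real content of the theorem is therefore the reverse containment.

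My plan is to argue at the level of support functions. Since $\overline{\cv{V}}$ and $\overline{\mc{W}_{in}}$ are both closed convex subsets of $\re^m$, by the Hahn--Banach separation theorem they coincide iff their (possibly infinite-valued) support functionals agree. For any $c\in\re^m$,
\[
\sigma_{\overline{\cv{V}}}(c)=\sup_{x\in T} c^T f(x)
\]
is a QCQP with the single quadratic constraint $q(x)\ge 0$, while
\[
\sigma_{\overline{\mc{W}_{in}}}(c)=\sup\bigl\{c^T(a+b^Tx+F\bullet X) : (x,X) \text{ satisfies the LMIs defining } \mc{W}_{in}\bigr\}
\]
is precisely Shor's first-order SDP relaxation of that QCQP. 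The already-established inclusion gives $\sigma_{\overline{\cv{V}}}\le\sigma_{\overline{\mc{W}_{in}}}$ for free, so the heart of the matter is the reverse inequality. That is, I must show that Shor's SDP relaxation is tight for a single-constraint QCQP, a close relative of the classical S-lemma.

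To prove tightness by hand, I would start from a (possibly asymptotically) optimal SDP point $(x^*,X^*)$, set $Y:=X^*-x^*x^{*T}=\sum_k v_k v_k^T$, and produce a convex combination of feasible points of the form $(x_i,x_ix_i^T)$ (hence rank-one on the moment side) with the same image $u^*=a+b^Tx^*+F\bullet X^*\in\re^m$. The algebraic engine is the two-point identity
\[
xx^T+vv^T=\tfrac{1}{2}(x+v)(x+v)^T+\tfrac{1}{2}(x-v)(x-v)^T,
\]
augmented by a one-parameter scaling $v\mapsto sv$ along each eigendirection so that the split points obey $q\ge 0$ (resp.\ $q=0$ in part~(ii)). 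Because the restriction of $q$ to any affine line is a one-variable quadratic, the averaged moment condition $q(x^*)+Q\bullet Y\ge 0$ (resp.\ $=0$) is exactly what is needed to solve for an admissible parameter. Iterating the construction strictly decreases $\rank(Y)$, and when $Y=0$ one has a convex decomposition of $u^*$ by genuine points of $V$. Part~(ii) is treated in the same way with inequalities replaced by equalities, the splitting parameter now chosen to land both split points on the quadric $E(q)$.

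For the non-closure statements: when $T$ is compact so are $V$ and $\cv{V}$, and the equality of closures combines with the trivial chain $\cv{V}\subseteq\mc{W}_{in}\subseteq\overline{\mc{W}_{in}}=\overline{\cv{V}}=\cv{V}$ to give honest equality. The main difficulties I expect are (a) the non-compact regime, where the SDP supremum may not be attained and one must supplement the construction with a recession-cone or limit argument, and (b) in part~(ii) the occurrence of affine lines $\{x^*+tv_k\}$ that meet $E(q)$ only tangentially or not at all (typically when $v_k^TQv_k=0$ or $q$ is indefinite); these degenerate directions will likely require a small perturbation of the eigenbasis diagonalizing $Y$, or a slightly more flexible splitting with more than two points per direction.
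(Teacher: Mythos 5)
Your reduction of the compact case to the closure statement, and your identification of the crux --- decomposing any feasible pair $(x,X)$ of the relaxation into a convex combination of pairs $(u_i,u_iu_i^T)$ with $u_i\in T$ --- are both correct; this is exactly what the paper's proof does, except that it obtains the decomposition by quoting a theorem of Fialkow and Nie on quadratic moment problems together with the Bayer--Teichmann theorem, rather than constructing it. The gap is in your construction. Writing $Y=X-xx^T=\sum_{k=1}^r v_kv_k^T$ and splitting each direction into the pair $x\pm s_kv_k$ with weights $\lambda_k/2$, the moment-matrix identity forces $\lambda_k s_k^2=1$ and $\sum_k\lambda_k=1$, and the localizing inequality $q(x)+Q\bullet Y\ge 0$ then says only that the \emph{weighted average} of $q$ over the $2r$ split points is nonnegative. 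It does not let you choose the $s_k$ (and $\lambda_k$) so that each split point individually lies in $S(q)$, let alone on $E(q)$; your claim that the averaged condition ``is exactly what is needed to solve for an admissible parameter'' is precisely the whole difficulty, not a computation. The iteration ``strictly decreases $\rank(Y)$'' is also not set up: after one split, the single localizing inequality must be redistributed between the two new atoms, each carrying its own residual matrix, and nothing in the proposal specifies how the induction hypothesis is restored.

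Moreover, in the noncompact case no such decomposition can exist in general, so the construction cannot be rescued by a routine recession-cone supplement. Take $n=1$, $q(x)=x$, $(f_1,f_2)=(x,x^2)$, and the feasible pair $x=0$, $X=1$: the moment matrix is the identity and the localizing constraint reads $0\ge 0$, yet the only measure on $S(q)=[0,\infty)$ with mass $1$ and mean $0$ is $\delta_0$, whose second moment is $0\ne 1$; correspondingly $(0,1)\in\mc{W}_{in}\setminus\cv{V}$, and only $\overline{\cv{V}}=\overline{\mc{W}_{in}}$ holds. This is exactly why the cited Fialkow--Nie theorem is itself only a closure statement in the noncompact case. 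Your plan has the right shape, but the splitting lemma it relies on (in the spirit of Sturm--Zhang rank-one decompositions, or equivalently the quoted quadratic moment theorem) is the real mathematical content of the theorem and is missing from the argument.
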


To prove the above theorem, we need a result on quadratic moment problems.
A {\it quadratic moment sequence} is a triple $(t,z, Z) \in \re \times \re^n \times \re^{n\times n}$
with $Z$ symmetric. We say $(t,z,Z)$ admits a representing measure supported on $T$
if there exists a positive Borel measure $\mu$ with its support $\supp{\mu} \subseteq T$ and
\[
t = \int 1 \,d \mu, \quad z = \int x d \mu, \quad Z = \int xx^T d \mu.
\]
Denote by $\mathscr{R}(T)$ the set of all such quadratic moment sequences
$(t,z,Z)$ satisfying the above.

\begin{theorem}(\cite[Theorems~4.7,4.8]{FiNi01})  \label{thm:FiNi}
Let $q(x) = c + d^Tx + x^TQx$, $T=S(q)$ or $E(q)$ be nonempty,
and $(t,z,Z)$ be a quadratic moment sequence satisfying
\[
 \bbm 1 & z^T \\ z &  Z \ebm \succeq 0, \quad
\bca
c + d^Tz +Q \bullet Z \geq 0, & \text{ if } T=S(q); \\
c + d^Tz +Q \bullet Z = 0, & \text{ if } T=E(q).
\eca
\]
\bit
\item [(i)] If $S(q)$ is compact, then  $(t,z,Z) \in \mathscr{R}(S(q))$;
otherwise,  $(t,z,Z) \in \overline{\mathscr{R}(S(q))}$.

\item [(ii)] If $E(q)$ is compact, then  $(t,z,Z) \in \mathscr{R}(E(q))$;
otherwise,  $(t,z,Z) \in \overline{\mathscr{R}(E(q))}$.
\eit

\end{theorem}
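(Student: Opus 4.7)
The plan is to prove Theorem~\ref{thm:FiNi} by a conic duality argument whose central input is the classical S-lemma. For either choice of $T$, let $\mathcal{K}(T)$ denote the convex set of triples $(t,z,Z)$ satisfying the hypothesized PSD and localizing conditions (inequality for $S(q)$, equality for $E(q)$). Trivially $\mathscr{R}(T) \subseteq \mathcal{K}(T)$, since a representing measure supported on $T$ automatically produces such moments. I would prove the reverse inclusion, up to closure, by showing that $\mathcal{K}(T)$ and $\overline{\mathscr{R}(T)}$ have identical dual cones in the finite-dimensional moment space $\re \times \re^n \times \mathcal{S}^n$, and then applying the bipolar theorem. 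To enable the conic argument cleanly, I would first homogenize, viewing both $\mathcal{K}(T)$ and $\mathscr{R}(T)$ as affine slices at $t=1$ of convex cones in which $t$ ranges freely.

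For the dual of $\mathscr{R}(T)$, a linear functional $L=(u,v,W)$ acts by $L(t,z,Z)=ut+v^Tz+W\bullet Z$, which equals $\int p\,d\mu$ for any representing measure $\mu$, where $p(x):=u+v^Tx+x^TWx$ is the quadratic associated to $L$. Nonnegativity of $L$ on the moment cone is equivalent to nonnegativity of this integral for every positive measure on $T$, which by testing against Dirac masses at points of $T$ is equivalent to $p\geq 0$ on $T$. For the dual of the conic extension of $\mathcal{K}(T)$, applying standard SDP duality to the two defining constraints yields: $L$ is in the dual iff there exists $\lambda\geq 0$ (case $T=S(q)$) or $\lambda\in\re$ (case $T=E(q)$) such that the quadratic polynomial $p(x)-\lambda q(x)$ is PSD, equivalently that its $(n+1)\times(n+1)$ Gram matrix is PSD. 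Yakubovich's S-lemma, in both its inequality and equality forms, then guarantees that this Lagrangian representation is equivalent to $p\geq 0$ on $T$ under mild Slater-type hypotheses. Thus the two dual cones coincide, and bipolarity (together with closedness of $\mathcal{K}(T)$ as the intersection of closed sets) forces $\mathcal{K}(T)=\overline{\mathscr{R}(T)}$.

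The compact case requires the additional observation that $\mathscr{R}(T)$ is itself closed when $T$ is compact. Given any sequence of representing measures whose moment triples converge to a target in $\mathcal{K}(T)$, total masses are bounded and supports all lie in the fixed compact $T$, so Prokhorov's theorem yields a weak-$*$ subsequential limit $\mu^*$ supported in $T$. Weak-$*$ continuity of integration against the bounded continuous test functions $1$, $x_i$, $x_ix_j$ on $T$ confirms that $\mu^*$ realizes the prescribed triple, so the overline in the statement can be dropped. (The same argument trivially recovers the classical Dirac-mass verification in the rank-one case, where $Z=zz^T$ and the hypothesis $c+d^Tz+Q\bullet Z\geq 0$ (or $=0$) reads as $q(z)\geq 0$ (or $=0$), placing $z$ in $T$ directly.)

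The hard part will be the equality S-lemma required for (ii), whose pristine form asserts that a quadratic nonnegative on $E(q)$ can be written as $p_0+\lambda q$ with $p_0$ a PSD quadratic and $\lambda\in\re$. This can fail in degenerate configurations such as $Q\succeq 0$ with $E(q)$ an affine subspace of positive dimension, or $q$ affine, or $E(q)$ a single point; each such case must be handled separately, typically by restricting to the ambient subspace cut out by the linear part of $q$ and reapplying the inequality S-lemma on the reduced problem, or by small perturbations of $(c,d,Q)$ combined with a limit argument compatible with the topology on moment triples. A secondary technical issue is verifying the strict feasibility hypothesis needed for strong conic duality (roughly, $\mathcal{K}(T)$ should contain a triple with $\bbm 1 & z^T\\z & Z\ebm\succ 0$ and strict localizing inequality) and, in its absence, arguing directly with bipolarity that the appropriate closure relation still holds.
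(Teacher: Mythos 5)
First, a point of comparison: the paper does not prove this statement at all. It is imported verbatim as Theorem~\ref{thm:FiNi} with a citation to \cite[Theorems~4.7,4.8]{FiNi01}, so there is no internal proof to measure your attempt against; the only question is whether your blind argument stands on its own. Your overall architecture --- identify the dual cone of $\mathscr{R}(T)$ with quadratics nonnegative on $T$, identify the dual cone of the LMI-defined set with the Lagrangian cone $\{\sigma+\lambda q\}$ via SDP duality, invoke an S-lemma to equate the two, close with bipolarity, and upgrade to exact membership in the compact case by a Prokhorov weak-$*$ compactness argument --- is the natural route and is in the spirit of the Riesz-functional-positivity approach of \cite{FiNi01}. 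The compact-case closedness argument and the Dirac-mass computation of $\mathscr{R}(T)^*$ are fine as stated.

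The genuine gap is that essentially all of the content of the theorem is concentrated in the one step you defer. The theorem assumes only that $T$ is nonempty, with no Slater or nondegeneracy hypothesis, and in exactly those degenerate configurations the exact S-lemma you invoke is false --- not only in the equality form needed for (ii), but also in the inequality form needed for (i) when $S(q)$ has empty interior (take $q=-x_1^2$, so $S(q)=\{x_1=0\}$, and $p=x_1x_2$: then $p\geq 0$ on $S(q)$ but $p-\lambda q=x_1x_2+\lambda x_1^2$ is never a PSD quadratic for any $\lambda\geq 0$). What can survive in these cases is only the \emph{closure} statement, namely that every quadratic nonnegative on $T$ lies in the closure of the Lagrangian cone (in the example, $k(x_1+x_2/2k)^2 - kx_1^2\to x_1x_2$), and that approximate representation is precisely what must be proved; it does not follow from Yakubovich's S-lemma, and your sketch of "restricting to the ambient subspace" or "perturbing $(c,d,Q)$ and passing to the limit" is a plan, not a proof --- in particular the limit argument must be shown compatible with the fixed constraint set $\mathcal{K}(T)$, which changes when $q$ is perturbed. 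Until the dual-cone equality $\mathcal{K}(T)^*=\mathscr{R}(T)^*$ is actually established in full generality (together with the closedness of the homogenized $\mathcal{K}(T)$ and the handling of failed strict feasibility in the SDP duality step, which you also flag but do not resolve), the bipolar argument has no input, and the proposal does not yet constitute a proof.
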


\bigskip
\noindent
{\it Proof of Theorem~\ref{thm:S>=0}} \,
(i) We have already seen that $\cv{V} \subseteq \mc{W}_{in}$,
which clearly implies $\overline{\cv{V}} \subseteq \overline{\mc{W}_{in}}$.
Suppose $(x,X)$ is a pair satisfying the conditions in $\mc{W}_{in}$.

If $T=S(q)$ is compact, by Theorem~\ref{thm:FiNi},
the quadratic moment sequence $(1,x,X)$ admits a representing measure supported in $T$.
By the Bayer-Teichmann Theorem \cite{BT},
the triple $(1,x,X)$ also admits a measure
having a finite support contained in $T$.
So, there exist $u_1,\ldots, u_r \in T$
and scalars $\lmd_1>0,\ldots, \lmd_r>0$ such that
\[
\bbm 1 & x^T \\ x  & X \ebm =
\lmd_1 \bbm 1 & u_1^T \\ u_1  & u_1u_1^T \ebm + \cdots +
\lmd_r \bbm 1 & u_r^T \\ u_r  & u_ru_r^T \ebm.
\]
The above implies that
\[
(a_1 + b_1^Tx+F_1\bullet X, \ldots, a_m + b_m^Tx+F_m\bullet X ) =
\sum_{i=1}^r \lmd_i (f_1(u_i), \ldots, f_m(u_i)).
\]
Clearly, $\lmd_1+\cdots+\lmd_r=1$. So, $\mc{W}_{in} \subseteq \cv{V}$
and hence $\mc{W}_{in} = \cv{V}$.

If $T=S(q)$ is noncompact,
the quadratic moment sequence $(1,x,X) \in \overline{\mathscr{R}(T)}$, and
\[
(1,x,X) = \lim_{k\to\infty} \, (1, x^{(k)}, X^{(k)}), \, \mbox{ with every } \,
 (1, x^{(k)}, X^{(k)}) \in \mathscr{R}(T).
\]
As we have seen in (i), every
\[
(a_1 + b_1^Tx^{(k)}+F_1\bullet X^{(k)}, \ldots, a_m + b_m^Tx^{(k)}+F_m\bullet X^{(k)} )
\in  \cv{V}.
\]
This implies
\[
(a_1 + b_1^Tx+F_1\bullet X, \ldots, a_m + b_m^Tx+F_m\bullet X )
\in \overline{\cv{V}}.
\]
So, $\overline{\mc{W}_{in}} \subseteq \overline{\cv{V}}$
and consequently $\overline{\mc{W}_{in}} = \overline{\cv{V}}$.

(ii) can be proved in the same way as for (i).
\qed

\begin{figure}[htb]
\centering
\btab{c}
\includegraphics[height=.7\textwidth]{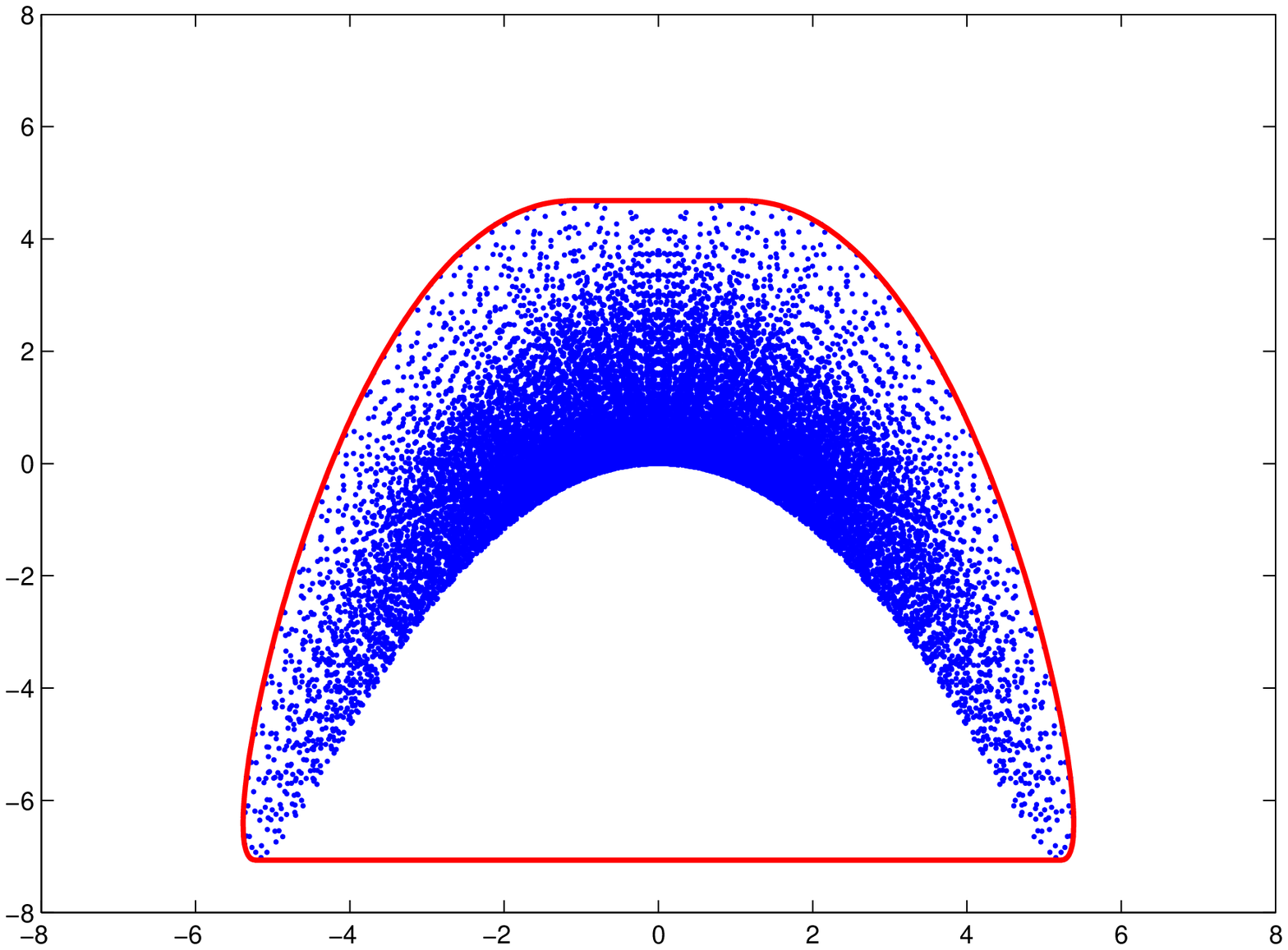}
\etab
\caption{The dotted area is the set $V$ in Example~\ref{emp:Qpar:ball},
and the outer curve is the boundary of the convex hull $\cv{V}$. }
\label{fig-Qpar:Ball}
\end{figure}

\begin{exm} \label{emp:Qpar:ball}
Consider the parametrization
\[
V = \left\{ \left(3x_1-2x_2-4x_3,5x_1x_2+7x_1x_3-9x_2x_3 \right): \,
\|x\|_2 \leq 1 \right \}.
\]
The set $V$ is drawn in the dotted area of Figure~\ref{fig-Qpar:Ball}.
By Theorem~\ref{thm:S>=0},
the convex hull $\cv{V}$ is given by the semidefinite representation
\[
\left\{ \bpm 3x_1-2x_2-4x_3 \\  5X_{12}+7X_{13}-9X_{23} \epm
\left| \baray{c}
\bbm 1 & x_1 & x_2 & x_3 \\ x_1 & X_{11} & X_{12} & X_{13} \\
x_2 & X_{12} & X_{22} & X_{23} \\ x_3 & X_{13} & X_{23} & X_{33} \ebm \succeq 0, \\
1- X_{11} - X_{22} - X_{33} \geq 0
\earay \right.
\right\}.
\]
The boundary of the above set is the outer curve
in Figure~\ref{fig-Qpar:Ball}. One can easily see that $\cv{V}$
is correctly given by the above semidefinite representation.
\qed
%%%%%%%%%%%%%%%%%%%%%%%%%%%%%%%%%%%%%%%%%%%%%%%%%%%%%%%%%%%%%%%%%%%%%%%%%%%%
\iffalse

clear all,

[th, ph] = meshgrid(0:pi/25:2*pi, 0:pi/25:pi);
A = cos(th); B=sin(th).*cos(ph);  C= sin(th).*sin(ph);
for r = 0:0.05:1
   for i = 1 : size(th,1)
      for j = 1 : size(th, 2)
         u = r*[A(i,j), B(i,j), C(i,j)]';
         plot(3*u(1)-2*u(2)-4*u(3), 5*u(1)*u(2)+7*u(1)*u(3)-9*u(2)*u(3),'.'); hold on;
      end
   end
end

axis([-8 8 -8 8]);

sdpvar X(3);
sdpvar x(3,1);
sdpvar y_1 y_2;

F = [ y_1==3*x(1)-2*x(2)-4*x(3), y_2==5*X(1,2)+7*X(1,3)-9*X(2,3), ...
[1 x'; x X] >=0, 1-X(1,1)-X(2,2)-X(3,3)>=0];
w = plot(F,[y_1,y_2],[1,1,1], 200,[]);
%fill(w(1,:),w(2,:),'b'), hold on,
plot(w(1,:),w(2,:),'-','color','r'), hold on,

\fi
%%%%%%%%%%%%%%%%%%%%%%%%%%%%%%%%%%%%%%%%%%%%%%%%%%%%%%%%%%%%%%%%%%%%%%%%%%%%

\end{exm}

\section{Two homogeneous constraints}
\setcounter{equation}{0}

Suppose $V\subset \re^m$ is a semialgebraic set parameterized as
\be
V = \{ (x^TA_1x, \ldots, x^TA_mx): \, x \in T \}.
\ee
Here, every $A_i$ is a symmetric matrix and $T$
is defined by two homogeneous (modulo constants)
inequalities/equalities $h_j(x)\geq 0$ or $h_j(x)=0$, $j=1,2$.
Write
\[
h_1(x) = x^TB_1x - c_1,  \quad h_2(x) = x^TB_2 x - c_2,
\]
for symmetric matrices $B_1,B_2$. The set $T$ is one of the four cases:
\[
E(h_1) \cap E(h_2), \quad S(h_1) \cap E(h_2), \quad
E(h_1) \cap S(h_2), \quad S(h_1) \cap S(h_2).
\]
Note the relations:
\[
x^TA_ix = A_i \bullet (xx^T) \quad \, (1\leq i\leq m), \quad xx^T \succeq 0,
\]
\[
x^TB_1x = B_1 \bullet (xx^T), \quad x^TB_2x = B_2 \bullet (xx^T).
\]
If we replace $xx^T$ by a symmetric matrix $X\succeq 0$,
then $V$, as well as $\cv{V}$,
is contained respectively in the following projections of spectrahedra:
\begin{align}
\baray{rcl}
\mc{H}_{e,e} &=& \{ (A_1 \bullet X, \ldots, A_m \bullet X): \,
X\succeq 0, \, B_1\bullet X = c_1, B_2 \bullet X = c_2 \}, \\
\mc{H}_{i,e} &=& \{ (A_1 \bullet X, \ldots, A_m \bullet X): \,
X\succeq 0, \, B_1\bullet X \geq c_1, B_2 \bullet X = c_2 \}, \\
\mc{H}_{e,i} &=& \{ (A_1 \bullet X, \ldots, A_m \bullet X): \,
X\succeq 0, \, B_1\bullet X = c_1, B_2 \bullet X \geq c_2 \}, \\
\mc{H}_{i,i} &=& \{ (A_1 \bullet X, \ldots, A_m \bullet X): \,
X\succeq 0, \, B_1\bullet X \geq c_1, B_2 \bullet X \geq c_2 \}.
\earay
\end{align}
To analyze whether they represent $\cv{V}$ respectively,
we need the following conditions for the four cases:

\be
\bca
C_{e,e}: \, \exists (\mu_1,\mu_2)\in \re \times \re, \, s.t. \quad
\mu_1 B_1 + \mu_2 B_2 \prec 0, \\
C_{i,e}: \, \exists (\mu_1,\mu_2)\in \re_+ \times \re, \, s.t. \quad
\mu_1 B_1 + \mu_2 B_2 \prec 0, \\
C_{e,i}: \, \exists (\mu_1,\mu_2)\in \re \times \re_+, \, s.t. \quad
\mu_1 B_1 + \mu_2 B_2 \prec 0, \\
C_{i,i}: \, \exists (\mu_1,\mu_2)\in \re_+ \times \re_+, \, s.t. \quad
\mu_1 B_1 + \mu_2 B_2 \prec 0.
\eca
\ee

\begin{theorem} \label{hull:2hmg}
Let $V\ne\emptyset,\mc{H}_{e,e}, \mc{H}_{i,e}, \mc{H}_{e,i}, \mc{H}_{i,i}$ be defined as above.
Then we have
\be
\cv{V}=
\bca
\mc{H}_{e,e}, & \text{ if } T = E(h_1) \cap E(h_2) \mbox{ and } C_{e,e} \text{ holds;} \\
\mc{H}_{i,e}, & \text{ if } T = S(h_1) \cap E(h_2) \mbox{ and } C_{i,e} \text{ holds;} \\
\mc{H}_{e,i}, & \text{ if } T = E(h_1) \cap S(h_2) \mbox{ and } C_{e,i} \text{ holds;} \\
\mc{H}_{i,i}, & \text{ if } T = S(h_1) \cap S(h_2) \mbox{ and } C_{i,i} \text{ holds.}
\eca
\ee

\end{theorem}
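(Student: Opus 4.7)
The inclusion $\cv{V} \subseteq \mc{H}_?$ (writing $\mc{H}_?$ for whichever of the four spectrahedral projections matches $T$) is immediate from the identity $B_j \bullet xx^T = x^T B_j x$: for each $x \in T$ the rank-one matrix $xx^T$ lies in $\mc{H}_?$, so $V \subseteq \mc{H}_?$, and convexity extends this to $\cv{V}$. The substance is the reverse inclusion, which I would derive through a rank-reduction / extremality argument in which each version of condition $C$ plays the role that compactness of $T$ played in Theorem~\ref{thm:S>=0}.

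First, observe that condition $C$ forces both $T$ and $\mc{H}_?$ to be compact. Under $C_{e,e}$ for instance, the identity $x^T(\mu_1 B_1 + \mu_2 B_2)x = \mu_1 c_1 + \mu_2 c_2$ combined with $\mu_1 B_1 + \mu_2 B_2 \prec 0$ bounds $\|x\|^2$ for $x \in T$, and the analogous trace identity bounds $\text{tr}(X)$ for $X \in \mc{H}_?$; the sign restrictions on $(\mu_1, \mu_2)$ in $C_{i,e}, C_{e,i}, C_{i,i}$ convert the corresponding inequality constraints into the same type of bound. Hence both $\cv{V}$ and $\mc{H}_?$ are compact convex subsets of $\re^m$, and by Krein--Milman it suffices to show that every extreme point of $\mc{H}_?$ lies in $V$.

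Let $y^* \in \mc{H}_?$ be extreme and let $X^* \succeq 0$ represent $y^*$ with minimal rank $r$; I claim $r = 1$. Supposing $r \geq 2$, write $X^* = VV^T$ with $V \in \re^{n \times r}$ of full column rank, and seek a nonzero symmetric $N \in \re^{r \times r}$ with $V^T B_j V \bullet N = 0$ for every \emph{active} constraint $j$ at $X^*$ (equalities plus tight inequalities). The dimension count $r(r+1)/2 \geq 3$ against at most two linear equations produces such $N$. Set $M = VNV^T$; then $X^*(s) := V(I+sN)V^T$ traces a closed segment of matrices in $\mc{H}_?$ whose endpoints $s_- < 0 < s_+$ arise either from $I + sN$ becoming singular (a rank drop) or from an inactive inequality just becoming tight. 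Since $X^* = \alpha X^*(s_-) + (1-\alpha) X^*(s_+)$ with $\alpha = s_+/(s_+ - s_-) \in (0,1)$, extremality of $y^*$ forces both $X^*(s_\pm)$ to represent $y^*$ (equivalently $A_j \bullet M = 0$ for all $j$); whichever of $s_\pm$ produces a rank drop then yields a feasible representative of $y^*$ of rank $< r$, contradicting minimality. Thus $r = 1$, $X^* = uu^T$ for some $u \in T$, and $y^* \in V$.

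The principal obstacle is checking, in each of the four variants of $C$ matched against the two (or one, or zero) active constraints, that $s_\pm$ are indeed both finite and that at least one produces a rank drop. When both constraints are active, pulling back $\mu_1 B_1 + \mu_2 B_2 \prec 0$ through $V$ gives $V^T(\mu_1 B_1 + \mu_2 B_2)V \prec 0$, which together with the two active-constraint equations on $N$ precludes any nonzero $N \succeq 0$ or $N \preceq 0$ and hence forces $N$ indefinite --- so both endpoints yield rank drops. When only one constraint is active, the sign restriction on the corresponding multiplier in $C_?$ is exactly what guarantees that the inactive-inequality boundary is hit at a finite $s$, supplying the missing endpoint by constraint activation rather than singularity. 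This case-by-case matching between the four $C_?$'s and the four active-constraint configurations is the routine but lengthy bookkeeping that constitutes the bulk of the proof.
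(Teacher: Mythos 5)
Your argument is correct in outline, but it reaches the conclusion by a genuinely different route than the paper. The paper fixes a point of $\mc{H}_{i,i}\setminus\cv{V}$, separates it from the compact convex set $\cv{V}$ by a hyperplane, minimizes the resulting linear functional over the feasible spectrahedron (condition $C_{i,i}$ gives strict dual feasibility, hence attainment of the primal optimum), converts the two inequalities into equalities by slack variables, and then cites Pataki's theorem to obtain an extremal optimizer of rank $r$ with $\frac{1}{2}r(r+1)\le 2$, i.e.\ a rank-one optimizer $uu^T$ with $u\in T$, contradicting the separation. You instead apply Krein--Milman (Minkowski) to the compact image set and reprove the rank-one property by hand via the perturbation $X(s)=V(I+sN)V^T$ and the same dimension count $\frac{1}{2}r(r+1)>2$; your observation that extremality of $y^*$ forces $A_j\bullet M=0$, so that only $B_1,B_2$ enter the count, is exactly the right mechanism, and your use of $V^T(\mu_1B_1+\mu_2B_2)V\prec 0$ to rule out semidefinite $N$ when both constraints are active is correct. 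Your route buys self-containedness (no separation argument, no SDP duality or attainment issue, no appeal to Pataki), at the cost of the active-set bookkeeping you defer --- and one subcase there deserves more than a wave: in the $S(h_1)\cap S(h_2)$ setting with neither inequality active at $X^*$, both endpoints $s_\pm$ can be constraint activations with no rank drop, so a single perturbation step does not contradict minimality of the rank. You must either iterate (each such step strictly increases the number of active constraints while preserving the rank, so after at most two steps you reach the all-active case, where the endpoints are forced to be rank drops), or, more cleanly, take $X^*$ to be an extreme point of the compact fiber over $y^*$ rather than a minimal-rank representative: then the mere existence of a nontrivial feasible segment through $X^*$ inside the fiber is already the contradiction, and no rank-drop analysis is needed at all.
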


\begin{proof}
We just prove for the case that $T=S(h_1)\cap S(h_2)$
and condition $C_{i,i}$ holds.
The proof is similar for the other three cases.
The condition $C_{i,i}$ implies that for some $\mu_1 \geq 0, \mu_2 \geq 0, \eps >0$
\[
-\mu_1 c_1 - \mu_2 c_2 \geq  x^T(-\mu_1 B_1 - \mu_2 B_2)x \geq \eps \|x\|_2^2.
\]
So, $T$ and $\cv{V}$ are compact.
Clearly, $\cv{V} \subseteq \mc{H}_{i,i}$.
We need to show $\mc{H}_{i,i} \subseteq \cv{V}$. 
Suppose otherwise it is false, then there exists a symmetric matrix $Z$ satisfying
\[
(A_1\bullet Z, \ldots, A_m \bullet Z) \not\in \cv{V},  \quad
B_1 \bullet Z \geq c_1, \quad B_2 \bullet Z \geq c_2, \quad Z \succeq 0.
\]
Because $\cv{V}$ is a closed convex set,
by the Hahn-Banach theorem, there exists a vector
$(\ell_0, \ell_1, \ldots, \ell_m) \ne 0$ satisfying
\[
\baray{c}
\ell_1 x^TA_1x + \cdots + \ell_m x^TA_mx \geq \ell_0 \quad \forall x \in T, \\
\ell_1 A_1 \bullet Z + \cdots + \ell_m  A_m \bullet Z < \ell_0.
\earay
\]
Consider the SDP problem
\be \label{P-sdp:ell}
\baray{rl}
p^*:=\min & \ell_1 A_1 \bullet X + \cdots + \ell_m  A_m \bullet X \\
s.t. &  X \succeq 0, \, B_1\bullet X \geq c_1, \, B_2 \bullet X \geq c_2.
\earay
\ee
Its dual optimization problem is
\be \label{D-sdp:ell}
\baray{rl}
\max & c_1 \lmd_1 + c_2 \lmd_2 \\
s.t. &  \sum_i \ell_i A_i - \lmd_1 B_1 - \lmd_2 B_2 \succeq 0, \, \lmd_1 \geq 0, \lmd_2 \geq 0.
\earay
\ee
The condition $C_{i,i}$ implies that the dual problem \reff{D-sdp:ell}
has nonempty interior. So, the primal problem \reff{P-sdp:ell} has an optimizer.
Define $\tilde{A}_0, \tilde{B}_1, \tilde{B}_2$ and a new variable $Y$ as:
\[
\tilde{A}_0 = \bbm  \sum_{i=1}^m \ell_iA_i & 0 & 0 \\ 0 & 0 & 0 \\ 0 & 0 & 0 \ebm,
\tilde{B}_1 = \bbm  B_1 & 0 & 0  \\ 0  & -1 & 0 \\ 0 & 0 & 0\ebm,
\tilde{B}_2 = \bbm  B_2 & 0 & 0  \\ 0  & 0 & 0 \\ 0 & 0 & -1\ebm,
Y = \bbm  X & Y_{12} \\  Y_{12}^T  & Y_{22} \ebm.
\]
They are all $(n+2) \times (n+2)$ symmetric matrices.
Clearly, the primal problem \reff{P-sdp:ell} is equivalent to
\be \label{P-sdp:tilde}
\baray{rl}
p^*:=\min & \tilde{A}_0 \bullet Y \\
s.t. &  Y \succeq 0, \, \tilde{B}_1 \bullet Y = c_1, \, \tilde{B}_1  \bullet Y = c_2.
\earay
\ee
It must also have an optimizer.
By Theorem~2.1 of Pataki \cite{Patk}, \reff{P-sdp:tilde}
has an extremal solution $U$ of rank r satisfying
\[
\half r(r+1) \leq 2.
\]
So, we must have $r=1$ and can write $Y=vv^T$.
Let $u=v(1:n)$. Then $u\in T$ and
\[
p^* = \ell_1 u^TA_1 u + \cdots + \ell_m u^TA_m u \geq \ell_0.
\]
However, $Z$ is also a feasible solution of \reff{P-sdp:ell},
and we get the contradiction
\[
p^* \leq \ell_1 A_1 \bullet Z + \cdots + \ell_m  A_m \bullet Z < p^*.
\]
Therefore, $\mc{H}_{i,i} \subseteq \cv{V}$ and they must be equal.
\end{proof}

\begin{figure}[htb]
\centering
\btab{c}
\includegraphics[height=.7\textwidth]{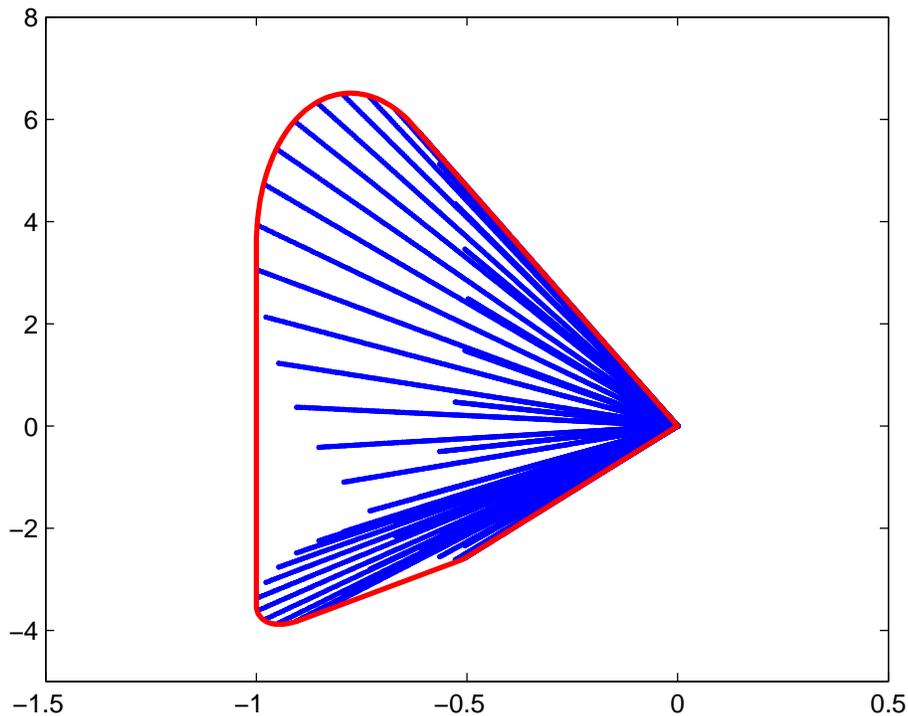}
\etab
\caption{ The dotted area is the set $V$ in Example~\ref{emp:hmg-2qc},
and the outer curve surrounds its convex hull. }
\label{fig-hmg:2q:>&=}
\end{figure}

\begin{exm}  \label{emp:hmg-2qc}
Consider the parameterization
\[
V = \left\{ \bpm 2x_1^2-3x_2^2-4x_3^2 \\  5x_1x_2-7x_1x_3-9x_2x_3 \epm
\left| \baray{c} x_1^2-x_2^2-x_3^2=0, \\  1-x^Tx \geq 0 \earay \right.
\right \}.
\]
The set $V$ is drawn in the dotted area of Figure~\ref{fig-hmg:2q:>&=}.
By Theorem~\ref{hull:2hmg},
the convex hull $\cv{V}$ is given by the following semidefinite representation
\[
\left\{ \bpm 2X_{11}-3X_{22}-4X_{33} \\  5X_{12}-7X_{13}-9X_{23} \epm
\left| \bbm X_{11} & X_{12} & X_{13} \\ X_{12} & X_{22} & X_{23} \\
X_{13} & X_{23} & X_{33} \ebm \succeq 0,
\baray{c} X_{11}-X_{22}-X_{33} =0, \\  1-X_{11}-X_{22}-X_{33}   \geq 0  \earay \right.
\right\}.
\]
The convex region described above
is surrounded by the outer curve in Figure~\ref{fig-hmg:2q:>&=},
which is clearly the convex hull of the dotted area.
\qed

%%%%%%%%%%%%%%%%%%%%%%%%%%%%%%%%%%%%%%%%%%%%%%%%%%%%%%%%%%%%%%%%%%%%%%%%%%%%
\iffalse

clear all,

sdpvar X(3);
sdpvar y_1 y_2;
F = [ y_1== 2*X(1,1)-3*X(2,2)-4*X(3,3), y_2==5*X(1,2)-7*X(1,3)-9*X(2,3), ...
X >=0, X(1,1)==X(2,2)+X(3,3), 1-X(1,1)-X(2,2)-X(3,3)>=0];

th = [0:pi/25:2*pi];
A = cos(th); B=sin(th);
rd =[0:0.0025:sqrt(1/2)];
for k = 1 : length(rd)
   for i = 1 : length(th)
         v = rd(k)*[A(i), B(i)]';
         u = [sqrt(v(1)^2+v(2)^2)  v(1) v(2)]';
         plot(2*u(1)^2-3*u(2)^2-4*u(3)^2, 5*u(1)*u(2)-7*u(1)*u(3)-9*u(2)*u(3),'.'); hold on;
         u = [-sqrt(v(1)^2+v(2)^2)  v(1) v(2)]';
         plot(2*u(1)^2-3*u(2)^2-4*u(3)^2, 5*u(1)*u(2)-7*u(1)*u(3)-9*u(2)*u(3),'.'); hold on;
   end
end

w = plot(F,[y_1,y_2],[1,1,1], 300,[]);
%fill(w(1,:),w(2,:),'b'), hold on,
plot(w(1,:),w(2,:),'-','color','r'), hold on,

axis([-1.5 0.5 -5 8]);

\fi
%%%%%%%%%%%%%%%%%%%%%%%%%%%%%%%%%%%%%%%%%%%%%%%%%%%%%%%%%%%%%%%%%%%%%%%%%%%%

\end{exm}

The conditions like $C_{i,i}$ can not be removed
in Theorem~\ref{hull:2hmg}. We show this
by a counterexample.

\begin{exm}
Consider the quadratically parameterized set
\[
V = \{(x_1x_2, x_1^2):\, 1-x_1x_2 \geq 0, 1+x_2^2-x_1^2 \geq 0\},
\] 
which is motivated by Example~4.4 of \cite{HLNS}.
The condition $C_{i,i}$ is clearly not satisfied.
%because there are no choices of $\mu_1\geq 0,\mu_2 \geq 0$ satisfying
%\[
%\bbm -\mu_2 & -\half \mu_1  \\  -\half \mu_1  & \mu_2 \ebm \prec 0.
%\]
The semidefinite relaxation $\mathscr{H}_{i,i}$ for $\cv{V}$ is
\[
\{ (X_{12},X_{11}): \, X \succeq 0, 1-X_{12} \geq 0, 1+X_{22}-X_{11} \geq 0 \}.
\]
They are not equal, and neither are their closures.
This is because $V$ is bounded above
in the direction $(1,1)$, while $\mathscr{H}_{i,i}$ is unbounded
(cf. \cite[Example~4.4]{HLNS}).
So, $\overline{ \cv{V} } \ne \overline{ \mathscr{H}_{i,i} }$ for this example,
which is due to the failure of the condition $C_{i,i}$.
\qed
\end{exm}

\section{Rational parametrization}
\setcounter{equation}{0}

Consider the rationally parameterized set
\be \label{var:U}
U \quad = \quad \left\{ \left(
\frac{f_1(x)}{f_0(x)}, \ldots, \frac{f_m(x)}{f_0(x)}
\right): \, x \in T \right\}
\ee
with all $f_0,\ldots, f_m$ being polynomials
and $T$ a semialgebraic set in $\re^n$.
Assume $f_0(x)$ is nonnegative on $T$ and
every $f_i/f_0$ is well defined on $T$,
i.e., the limit $\lim_{x\to z} f_i(x)/f_0(x)$ exists
whenever $f_0$ vanishes at $z\in T$.
The convex hull $\cv{U}$ would be
investigated through considering the polynomial parameterization
\be \label{def:P-hmg}
P \quad = \quad \left\{
\Big(f_1^h(x^h), \ldots, f_m^h(x^h)\Big): \, f_0^h(x^h)=1, \, x^h \in T^h \right\}.
\ee
Here $x^h=(x_0,x_1,\ldots,x_n)$ is an augmentation of $x$ and
\[
f_i^h(x^h) \quad = \quad x_0^{d}f_i(x/x_0) \qquad (d=\max_i \deg(f_i))
\]
is a homogenization of $f_i(x)$,
and $T^h$ is the homogenization of $T$ defined as
\be \label{df:T^h}
T^h = \overline{\{x^h:\, x_0>0, x/x_0 \in T \} }.
\ee
The relation between $\cv{V}$ and $\cv{P}$ is given as below.

%%%%%%%%%% OLD Version %%%%%%%%%%%%
\iffalse
\begin{pro} \label{cvx:rat=>poly}
Suppose $f_0(x)$ is nonnegative on $T$
and $f_0$ does not vanish on a dense subset of $T$. Then
\be \label{rat:clos=}
\overline{ \cv{U} } \quad = \quad \overline{ \mbox{conv}\left\{
\Big(f_1^h(x^h), \ldots, f_m^h(x^h)\Big): \, f_0^h(x^h)=1, \, x^h \in T^h
\right\} }.
\ee
Particularly, if $T^h\cap\{f_0^h(x^h)=1\}$ and $T$ are compact and $f_0(x)$ is positive on $T$, then
\be \label{rt:cvx=}
\cv{U} \quad = \quad  \mbox{conv}\left\{
\Big(f_1^h(x^h), \ldots, f_m^h(x^h)\Big): \, f_0^h(x^h)=1, \, x^h \in T^h
\right\}.
\ee
\end{pro}
\fi
%%%%%%%%%%%%%%%%%%%%%%%%%%%%%%%%%%%%%%%%%%%%%

\begin{pro} \label{cvx:rat=>poly}
Suppose $f_0(x)$ is nonnegative on $T$ and does not vanish
on a dense subset of $T$, and every $f_i/f_0$ is well defined on $T$. Then
\be \label{rat:clos=}
\overline{ \cv{U} } \quad = \quad \overline{ \cv{P} }.
\ee
Moreover, if $T^h\cap\{f_0^h(x^h)=1\}$ and
$T$ are compact and $f_0(x)$ is positive on $T$, then
\be \label{rt:cvx=}
\cv{U} \quad = \quad  \cv{P}.
\ee
\end{pro}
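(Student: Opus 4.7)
The plan is to relate $U$ and $P$ through an explicit rescaling map. First, I would set $T' = \{\xi \in T : f_0(\xi) > 0\}$, which is dense in $T$ by hypothesis, and consider $U' = \{(f_1(\xi)/f_0(\xi),\ldots,f_m(\xi)/f_0(\xi)) : \xi \in T'\}$. Since every $f_i/f_0$ extends continuously to $T$, one obtains $U \subseteq \overline{U'}$ and trivially $U' \subseteq U$, hence $\overline{\cv{U}} = \overline{\cv{U'}}$, reducing \reff{rat:clos=} to comparing $U'$ with $P$. Writing $\mc{S} := \{x^h : x_0 > 0,\ x/x_0 \in T\}$ so that $T^h = \overline{\mc{S}}$, I would then define $\phi : T' \to \re^{n+1}$ by $\phi(\xi) = f_0(\xi)^{-1/d}\,(1,\xi)$ and verify, using the homogeneity identity $f_i^h(tx^h) = t^d f_i^h(x^h)$, that $\phi(\xi) \in \mc{S}$, $f_0^h(\phi(\xi)) = 1$, and $f_i^h(\phi(\xi)) = f_i(\xi)/f_0(\xi)$. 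This immediately gives $U' \subseteq P$ and so $\overline{\cv{U'}} \subseteq \overline{\cv{P}}$.

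For the reverse inclusion, given $y = (f_1^h(x^h),\ldots,f_m^h(x^h)) \in P$ with $f_0^h(x^h) = 1$, I would approximate by a sequence $x^{h,(k)} \in \mc{S}$ tending to $x^h$ and rescale: set $t_k := f_0^h(x^{h,(k)})^{-1/d}$ (defined for $k$ large, since $f_0^h(x^{h,(k)}) \to 1$) and $\widetilde{x}^{h,(k)} := t_k x^{h,(k)}$, which lies in $\mc{S}$, satisfies $f_0^h(\widetilde{x}^{h,(k)}) = 1$, and converges to $x^h$. A short computation using $\xi^{(k)} := x^{(k)}/x_0^{(k)} \in T'$ and the identity $t_k x_0^{(k)} = f_0(\xi^{(k)})^{-1/d}$ shows that $\widetilde{x}^{h,(k)} = \phi(\xi^{(k)})$, so $(f_i^h(\widetilde{x}^{h,(k)}))_i = (f_i(\xi^{(k)})/f_0(\xi^{(k)}))_i \in U'$; letting $k \to \infty$ gives $y \in \overline{U'}$, proving $P \subseteq \overline{U'}$ and completing \reff{rat:clos=}.

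For \reff{rt:cvx=} the positivity hypothesis gives $T' = T$, so $\phi$ is continuous on all of $T$ and $U = \{(f_i^h(\phi(\xi)))_{i=1}^m : \xi \in T\} \subseteq P$. Conversely, any $x^h \in T^h \cap \{f_0^h = 1\}$ with $x_0 = 0$ would, via approximating ratios $\xi^{(k)} := x^{(k)}/x_0^{(k)} \in T$, force $x = \lim_k x_0^{(k)}\xi^{(k)} = 0$ by boundedness of $T$, contradicting $f_0^h(x^h) = 1$; hence $x_0 > 0$, closedness of $T$ then yields $\xi := x/x_0 \in T$, and $(f_i^h(x^h))_i = (f_i(\xi)/f_0(\xi))_i \in U$, so $P \subseteq U$ and the two convex hulls agree. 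The hard part of the whole argument is the rescaling in paragraph two: one must simultaneously ensure that $t_k$ is well-defined (so that $f_0^h > 0$ along the approximating sequence), that $\widetilde{x}^{h,(k)}$ stays inside the open cone $\mc{S}$, and that it actually equals $\phi$ evaluated at a point of $T'$ — without this single identification, the homogenization/dehomogenization dance would fail to commute with the limit and the inclusion $P \subseteq \overline{U'}$ would collapse.
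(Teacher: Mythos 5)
Your proof is correct. For \reff{rat:clos=} you follow essentially the same route as the paper: restrict to the dense subset $T'$ of $T$ where $f_0>0$, use homogeneity to rescale onto the slice $\{f_0^h(x^h)=1\}$, and pass to closures. The difference is only one of explicitness --- you name the rescaling map $\phi(\xi)=f_0(\xi)^{-1/d}(1,\xi)$ and carefully handle the closure in the definition of $T^h$ (the two-sided inclusions $U'\subseteq P$ and $P\subseteq\overline{U'}$), whereas the paper compresses this into the remark that one may assume $f_0^h(x^h)=1$ by homogeneity and then invoke density of $T_1$ in $T$. For \reff{rt:cvx=} your argument genuinely differs: the paper observes that under the stated hypotheses both $\cv{U}$ and $\cv{P}$ are compact, hence closed, so \reff{rt:cvx=} follows from \reff{rat:clos=} by dropping closures; you instead prove the stronger set-level identity $U=P$, using boundedness of $T$ to exclude points of $T^h\cap\{f_0^h(x^h)=1\}$ with $x_0=0$ and closedness of $T$ to dehomogenize. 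Your version yields a sharper conclusion (equality before taking convex hulls) and in fact never uses the compactness of $T^h\cap\{f_0^h(x^h)=1\}$, only that of $T$; the paper's version is shorter and simply recycles \reff{rat:clos=}. Both arguments are valid.
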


\begin{proof}
Let $T_1$ be a dense subset of $T$ such that $f_0(x) >0$ for all $x \in T_1$. Clearly,
\[
\overline{ \cv{U} }  = \overline{ \mbox{conv}\left\{
 \left(\frac{f_1^h(x^h)}{f_0^h(x^h)}, \ldots, \frac{f_m^h(x^h)}{f_0^h(x^h)} \right):
x^h \in T_1^h   \right \} }.
\]
Since every $f_i^h$ is homogeneous, we can assume that $f_0^h(x^h)=1$. Then,
\[
\overline{ \cv{U} }  =  \overline{ \mbox{conv}\left\{
\left(f_1^h(x^h), \ldots, f_m^h(x^h) \right): f_0^h(x^h)=1, \,
x^h \in T_1^h   \right \} }.
\]
The density of $T_1$ in $T$ and the above imply \reff{rat:clos=}.

When $T$ is compact and $f_0(x)$ is positive on $T$, $\cv{U}$ is compact.
The $\cv{P}$ is also compact when $T^h\cap\{f_0^h(x^h)=1\}$ is compact.
Thus, \reff{rt:cvx=} follows from \reff{rat:clos=}.
\end{proof}

\noindent
{\it Remark:} If $d=\max_i \deg(f_i)$ is even
and $T$ is defined by polynomials of even degrees,
then we can remove the condition $x_0>0$ in the definition of $T^h$ in \reff{df:T^h}
and Proposition~\ref{cvx:rat=>poly} still holds.

If every $f_i$ in \reff{var:U} is quadratic,
$T$ is defined by a single quadratic inequality,
and $f_0$ is nonnegative on $T$,
then a semidefinite representation for the convex hull $\cv{U}$ or its closure
can be obtained by applying Proposition~\ref{cvx:rat=>poly} and Theorem~\ref{hull:2hmg}.
Suppose $T=\{x: g(x) \geq 0\}$, with $g(x)$ being quadratic.
Write every $f_i^h(x^h) = (x^h)^TF_i\,x^h$ and $g^h(x^h) = (x^h)^TG\,x^h$. Then
\be \label{eq:4.6}
\overline{ \cv{P} } \quad = \quad \overline{ \mbox{conv}\left\{
\Big((x^h)^TF_1\,x^h, \ldots, (x^h)^TF_m\,x^h\Big): \,
\baray{c} (x^h)^TF_0\,x^h=1, \\  x_0>0, (x^h)^TG\,x^h \geq 0 \earay  \right\} }.
\ee
Since the forms $f_i^h$ and $g^h$ are all quadratic,
the condition $x_0>0$ can be removed
from the right hand side of \reff{eq:4.6}, and we get
\be
\overline{ \cv{P} } \quad = \quad \overline{ \mbox{conv}\left\{
\Big((x^h)^TF_1\,x^h, \ldots, (x^h)^TF_m\,x^h\Big): \,
\baray{c} (x^h)^TF_0\,x^h=1, \\ (x^h)^TG\,x^h \geq 0 \earay  \right\} }.
\ee
If there are numbers $\mu_1\in\re$ and $\mu_2\in \re_+$ satisfying
$\mu_1 F_0 + \mu_2 G \prec 0$,
then a semidefinite representation for $\overline{ \cv{P} }$
can be obtained by applying Theorem~\ref{hull:2hmg}.
The case $T=\{x: g(x) = 0\}$ is defined
by a single quadratic equality is similar.

\begin{figure}[htb]
\centering
\btab{c}
\includegraphics[height=.7\textwidth]{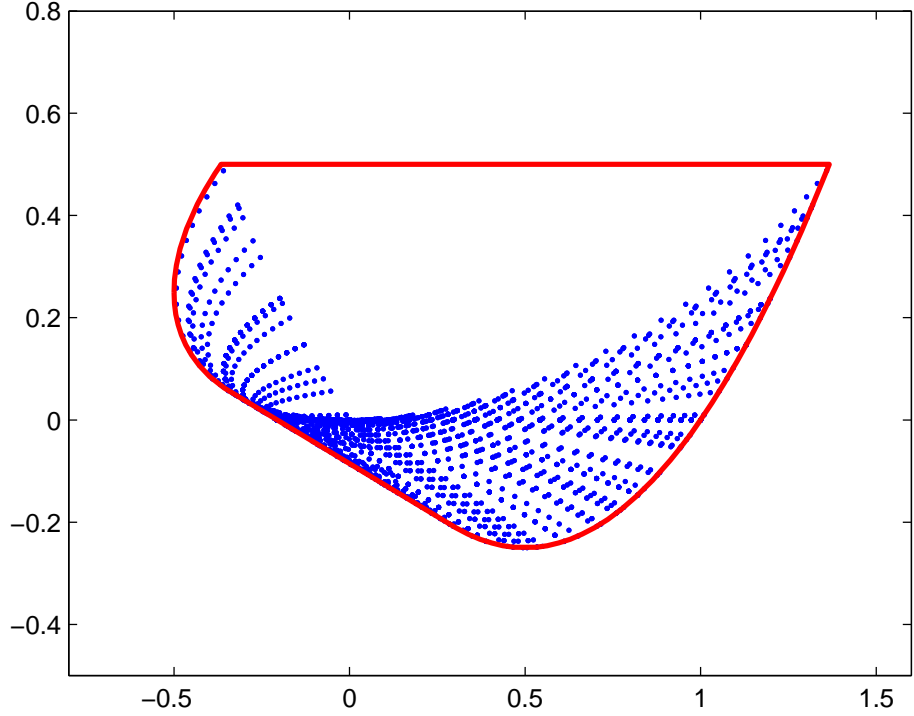}
\etab
\caption{The dotted area is the set $U$ in Example~\ref{emp:ratQ},
and the outer curve is the boundary of its convex hull. }
\label{fig-Qrat:1q}
\end{figure}

\begin{exm} \label{emp:ratQ}
Consider the quadratically rational parametrization:
\[
U = \left\{ \left(\frac{x_1^2+x_2^2+x_3^2+x_1+x_2+x_3}{1+x^Tx},
\frac{x_1x_2+x_1x_3+x_2x_3}{1+x^Tx} \right): \,x_1^2+x_2^2+x_3^2 \leq 1 \right \}.
\]
The dotted area in Figure~\ref{fig-hmg:2q:>&=} is the set $U$ above.
The set $P$ in \reff{def:P-hmg} is
\[
P = \left\{ \bpm x_1^2+x_2^2+x_3^2+x_0(x_1+x_2+x_3) \\
x_1x_2+x_1x_3+x_2x_3 \epm
\left|\baray{c}
x_0^2+x_1^2+x_2^2+x_3^2= 1, \\ x_0^2-x_1^2-x_2^2-x_3^2 \geq 0
\earay \right.
\right \}.
\]
By Theorem~\ref{hull:2hmg},
the convex hull $\cv{P}$ is given by the semidefinite representation
\[
\left\{ \bpm X_{11}+X_{22}+X_{33}+X_{01}+X_{02}+X_{03} \\
X_{12}+X_{13}+X_{23} \epm
\left|\baray{c}
\bbm X_{00} & X_{01} & X_{02} & X_{03} \\
X_{01} & X_{11} & X_{12} & X_{13} \\
X_{02} & X_{12} & X_{22} & X_{23} \\
X_{03} & X_{13} & X_{23} & X_{33} \ebm \succeq 0, \\
X_{00} + X_{11} + X_{22} + X_{33} =1, \\
 X_{00} - X_{11} - X_{22} - X_{33} \geq 0
\earay \right.
\right\}.
\]
The convex region described above is
surrounded by the outer curve in Figure~\ref{fig-Qrat:1q},
which also surrounds the convex hull of the dotted area.
Since $T$ is compact and the denominator $1+x^Tx$ is strictly
positive, $\cv{U} = \cv{P}$ by Proposition~\ref{cvx:rat=>poly}.
\qed

%%%%%%%%%%%%%%%%%%%%%%%%%%%%%%%%%%%%%%%%%%%%%%%%%%%%%%%%%%%%%%%%%%%%%%%%%%%%
\iffalse

clear all,

sdpvar X(4);
sdpvar y_1 y_2;

F = [trace(X)==1, y_1==X(1,1)+X(2,2)+X(3,3)+X(1,4)+X(2,4)+X(3,4), ...
y_2==X(1,2)+X(1,3)+X(2,3),X >=0, X(4,4)-X(1,1)-X(2,2)-X(3,3)>=0];

[th, ph] = meshgrid(0:pi/10:2*pi, 0:pi/10:pi);
A = cos(th); B=sin(th).*cos(ph);  C= sin(th).*sin(ph);
for r = 0:0.05:1
   for i = 1 : size(th,1)
      for j = 1 : size(th, 2)
         u = r*[A(i,j), B(i,j), C(i,j)]';
        plot((u'*u+sum(u))/(1+u'*u), ...
            (u(1)*u(2)+u(1)*u(3)+u(2)*u(3))/(1+u'*u),'.','color','b'); hold on;
      end
   end
end

axis([-0.8 1.6 -0.5 0.8]);

w = plot(F,[y_1,y_2],[1,1,1], 100,[]);
%fill(w(1,:),w(2,:),'b'), hold on,
plot(w(1,:),w(2,:),'-','color','r'), hold on,

\fi
%%%%%%%%%%%%%%%%%%%%%%%%%%%%%%%%%%%%%%%%%%%%%%%%%%%%%%%%%%%%%%%%%%%%%%%%%%%%

\end{exm}

\end{document}